\DeclareMathOperator{\sgn}{sgn}
\newtheorem{theorem}{Theorem}[section]
\newtheorem{lem}[theorem]{Lemma}
\newtheorem{prop}[theorem]{Proposition}
\newtheorem{cor}[theorem]{Corollary}
\theoremstyle{definition}
\newtheorem{defn}[theorem]{Definition}
\newtheorem{ex}[theorem]{Example}
\theoremstyle{remark}
\newtheorem{rem}[theorem]{Remark}
\numberwithin{equation}{section}
\begin{document}
\title[Mean Value and comparative convex functions]
{Mean Value Functions and comparative convexity}
\author{M.H.Hooshmand}
\address{
Department of Mathematics, Shiraz Branch, Islamic Azad University, Shiraz, Iran}

\email{\tt hadi.hooshmand@gmail.com, hooshmand@iaushiraz.ac.ir}

\subjclass[2000]{26E60, 26A51, 39B22, 39B62, 39B72}

\keywords{Mean value theorem, convex function, functional equation and inequality, limit summablity of functions
 \indent }
\date{}
----------------------------------------------
\begin{abstract}
During the study of the topic of limit summability of functions (introduced by the author in
2001), we encountered some types of functions that are related to the mean value theorem. In this
paper, we formally define mean value and pointwise MV-functions associated with a given real function
and extend some aspects of the mean value theorem and properties. Also, we introduce and study
an induced conception which we call comparative convexity (and concavity). As many applications
of the study, we prove some uniqueness conditions for related functional equations and completely
solve several functional inequalities.
\end{abstract}
\maketitle
\section{Introduction}
When we were generalizing Euler type constants using limit summability of real functions (see \cite{MHMH,MH}), we found that we can use some classes of useful and applicable functions which are induced from  the mean value theorem (MVT). Thereafter, by searching through some relevant sources, we found several articles in which the related functional equations were studied (e.g., see \cite{Acz, AczKuch, AczKuch2, ZolBa, Mat, SaRi}). Up to our observation, they have studied mean value types theorems, generalizations, and associated functional equations, but in non of them the related functions were explicitly defined and studied. During the study of such functions, we also obtained an induced conception which  we call comparative convexity (and concavity).
\section{Mean Value functions of a real function}
 Now, let us formally define MV-functions as follows. During this paper we consider $I$
as a (bounded or unbounded) interval.
\begin{defn}
Let $I$ be an interval and $f:I\rightarrow \mathbb{R}$ a given function. We call
$g:I^\circ\rightarrow \mathbb{R}$ a ``Mean Value function of $f$" (MV-function) if for every two distinct elements
$x,y\in I$ there exists a $c$ ($=c_{x,y}$) between them such that $f(y)-f(x)=g(c)(y-x)$.
\end{defn}
If $f$ is differentiable on $I^\circ$ and continuous on $I$, then $g=f'$ is
a MV-function of $f$. The next example shows possibility of existence of other MV-functions even
if $f$ is (infinitely) differentiable every where.
\begin{ex}
The Dirichlet function $D=\chi_\mathbb{Q}$ (that is the indicator of rational numbers)
is a MV-function for the identity and all constant functions. Indeed,
if $f$ is a constant (resp. the identity) function, then every function $g$ such that $g^{-1}(0)$
(resp. $g^{-1}(1)$) is a dense subset of $\mathbb{R}$ (e.g., $g=\chi_\mathbb{Q}$
or $g=1-\chi_\mathbb{Q}$) is a MV-function of $f$.
\end{ex}
Now, two natural questions arise that: how large is the class of MV-functions of $f$?, and under what conditions
$f'$ is the only MV-function?

\begin{theorem}
Every function $f:I\rightarrow \mathbb{R}$ has infinitely many MV-functions, and cardinality of the class of
such functions is $2^\mathfrak{\mathfrak{c}}$ $($the biggest possible cardinality, see the next remark$)$.\\
But, if $f$ is differentiable and $f'$  $($resp. $f'^{-1})$ is continuous,
then $f'$ is the unique continuous $($inverse-continuous$)$ MV-function of $f$.
\end{theorem}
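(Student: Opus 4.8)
The statement splits into a cardinality claim and a uniqueness claim, and I would attack them by completely different methods. The upper bound in the cardinality claim is immediate: since $I$ is nondegenerate, $|I^\circ| = \mathfrak{c}$, so the collection of \emph{all} maps $I^\circ\to\mathbb{R}$ already has cardinality $\mathfrak{c}^{\mathfrak{c}} = 2^{\mathfrak{c}}$, and there can be no more MV-functions than that. All the work is in the matching lower bound, which I would obtain by a transfinite construction that also settles existence for an \emph{arbitrary} $f$.

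Write $s_{x,y} = \frac{f(y)-f(x)}{y-x}$ for the average slope, and well-order the set $P = \{(x,y): x<y,\ x,y\in I\}$ of pairs as $\{(x_\alpha,y_\alpha):\alpha<\mathfrak{c}\}$; note $|P| = \mathfrak{c}$. Build a partial function $g$ by recursion: at stage $\alpha$ choose, distinct from everything used so far, a witness $c_\alpha\in(x_\alpha,y_\alpha)$ \emph{together with} one extra reserved point $b_\alpha\in I^\circ$, which is possible because fewer than $\mathfrak{c}$ points have been used (as $|\alpha|<\mathfrak{c}$) while $(x_\alpha,y_\alpha)$ and $I^\circ$ each contain $\mathfrak{c}$ points; then set $g(c_\alpha)=s_{x_\alpha,y_\alpha}$. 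After the recursion every pair has its own witness, so \emph{any} extension of $g$ to $I^\circ$ is an MV-function of $f$. Since the $\mathfrak{c}$ reserved points $b_\alpha$ are never witnesses, prescribing the values of $g$ on $\{b_\alpha:\alpha<\mathfrak{c}\}$ arbitrarily yields $\mathfrak{c}^{\mathfrak{c}} = 2^{\mathfrak{c}}$ distinct MV-functions, closing the count.

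For uniqueness I would argue pointwise, fixing $a\in I^\circ$ and shrinking pairs down to $a$. By the observation just after the definition, $f'$ is itself an MV-function (continuous in the first case, inverse-continuous in the second), so only uniqueness is at stake. Let $g$ be an MV-function of the relevant type; for $y_n\downarrow a$ pick a witness $c_n\in(a,y_n)$ for $(a,y_n)$, so $g(c_n)=s_{a,y_n}$ with $c_n\to a$ and $s_{a,y_n}\to f'(a)$ by differentiability at $a$. If $g$ is continuous, passing to the limit gives $g(a)=\lim_n g(c_n)=f'(a)$. If instead $g$ is injective with $g^{-1}$ continuous, I rewrite the witness relation as $c_n=g^{-1}(s_{a,y_n})$ and let $n\to\infty$: provided $f'(a)$ lies in the range of $g$, continuity of $g^{-1}$ forces $g^{-1}(f'(a))=\lim_n c_n=a$, i.e.\ $g(a)=f'(a)$ again.

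The one genuine obstacle, which I expect to be the crux of the inverse-continuous case, is showing $f'(a)$ actually belongs to the range of $g$. I would dispatch it with a short lemma. Since $f'^{-1}$ exists, $f'$ is injective, and being a derivative it has the Darboux property; hence $f'$ is continuous and strictly monotone. Setting $F(t)=f(t)-f(a)-f'(a)(t-a)$, whose derivative $f'(t)-f'(a)$ changes sign exactly at $a$, one sees $F$ has a strict minimum at $a$ with $F(a)=0$, so for small $\ell>0$ the level set $\{F=\ell\}$ contains points $x_\ell<a<y_\ell$ with $F(x_\ell)=F(y_\ell)=\ell$, i.e.\ $s_{x_\ell,y_\ell}=f'(a)$. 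Thus $f'(a)$ is a genuine average slope and so lies in the range of every MV-function. (These exact-slope pairs even give an alternative finish: their common witness, forced to be unique by injectivity of $g$, must lie in $\bigcap_\ell (x_\ell,y_\ell)=\{a\}$, whence $g(a)=f'(a)$.)
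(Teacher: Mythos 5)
Your proof is correct, and it takes a genuinely different route from the paper's on both halves of the statement. For the cardinality claim, the paper exhibits a single large family that works uniformly for every $f$: functions $g$ whose preimage of each point is dense in $\overline{I}$ (equivalently, everywhere surjective functions), and then it imports the count $2^{\mathfrak{c}}$ from Theorem 4.3 of the Aron--Gurariy--Seoane-Sep\'ulveda paper on lineability. Your transfinite recursion with reserved free points $b_\alpha$ is self-contained (modulo the axiom of choice), produces the lower bound $2^{\mathfrak{c}}$ directly without any citation, and makes the ``any extension is an MV-function'' mechanism transparent; what it gives up is the uniformity and the connection to the lineability literature that the paper wants for its subsequent remark. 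For uniqueness, the continuous case is essentially the paper's (which it dismisses with ``one can easily see''), and the inverse-continuous case follows the same limiting scheme $y\to x^{+}$; but you isolate and close a gap the paper glosses over, namely that $h=g^{-1}$ can only be evaluated at $f'(a)$ once one knows $f'(a)$ lies in the range of $g$. Your lemma --- injectivity of $f'$ plus the Darboux property force $f'$ to be continuous and strictly monotone, whence the function $F(t)=f(t)-f(a)-f'(a)(t-a)$ has a strict minimum at $a$ and its level sets supply chords of slope exactly $f'(a)$ --- settles this cleanly, and your alternative finish via the nested witnesses in $\bigcap_\ell(x_\ell,y_\ell)=\{a\}$ is a nice bonus that avoids the limit argument altogether. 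In short: same theorem, more elementary count, and a more rigorous treatment of the inverse-continuous case than the paper itself provides.
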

\begin{proof}
Let $g:I\rightarrow \mathbb{R}$ be a function
such that preimage of every point (i.e., $g^{-1}(\{ t\})$; $t\in \mathbb{R}$) is dense in $\overline{I}$.
For each distinct  $ x,y\in \mathbb{R}$, we can pick an arbitrary point
$t\in g^{-1}\left(\frac{f(y)-f(x)}{y-x}\right)\cap (\min\{x,y\},\max\{x,y\})$ and put
$c=c_{x,y}:=t$ in the definition. Now, we show that the cardinality of such functions $g$ is $2^c$.
A function $g$  has the property if and only if it is everywhere surjective
(i.e., it transforms every open interval into the whole real line).
For if $g$ is everywhere surjective, then given $x\in \mathbb{R}$ and $(x-\varepsilon,x+\varepsilon)\subseteq I$, we have
 $f((x-\varepsilon,x+\varepsilon))=\mathbb{R}$, thus $f^{-1}(t)\in (x-\varepsilon,x+\varepsilon)$ for all $t\in \mathbb{R}$.
 Since this must hold for every $\varepsilon>0$ such that  $(x-\varepsilon,x+\varepsilon)\subseteq I$, we get the property.
Conversely, if the preimage $g^{-1}(t)$ is dense in $\overline{I}$, for every $t$, then it must intersect every open set.
 In particular, it must intersect every set of the form $(x-\varepsilon,x+\varepsilon)$ in $I$,
hence $g$ is everywhere surjective. Therefore, Theorem 4.3 of \cite{Gur} completes the first part of proof.\\
For the second part, let $f$ be differentiable and $g$ be a MV-function of $f$.
If $g$ is continuous, then one can easily see that $g=f'$. But, if $g$ is invertible
and $g^{-1}$ is continuous, then put $h=g^{-1}$ and consider $x<y$  (without lose of generality).
Thus $x<h(\frac{f(y)-f(x)}{y-x})<y$. By fixing $x$ and $y\rightarrow x$
we get
$$
\lim_{y\rightarrow x^+}h(\frac{f(y)-f(x)}{y-x})=x.
$$
This implies $hf'_+(x)=x$, since $h$ is continuous. Hence $h=f'^{^{-1}}$ and so $g=f'$.
\end{proof}
\begin{rem}
It is important to know that the family of MV-functions of $f:I\rightarrow \mathbb{R}$ is
$2^\mathfrak{c}$-lineable as the aspect of pathological properties of functions (see \cite{Far, Gur}).
Also, one can construct infinitely many such functions $g:\mathbb{R}\rightarrow \mathbb{R}$ in the above theorem (that preimage of every point is dense)
explicitly. A famous example is Conway's base 13 function (see \cite{Rad}).
Another method to construct such functions is possible as follows:
consider the additive group $\mathbb{R}/\mathbb{Q}$. It is clear that this group has
cardinality $\mathfrak{c}$, so there is a bijection $\varphi: \mathbb{R}/\mathbb{Q}\to \mathbb{R}$.
Defining $g(x):=\varphi([x]_{\mathbb{R}/\mathbb{Q}})$ gives us a function with the desired property.
\end{rem}
As we know behaviors of the derivative of a function determines several properties
of that function. Thus it is very important to know that when the role of a MV-function
$g$ is similar to the derivative function.
\begin{lem}$($Basic relations between a real function and its MV-functions$)$\\
Let $g$ be a MV-function of $f:I\rightarrow \mathbb{R}$.\\
(a) If $g\geq 0$ $($on $I^\circ)$, then $f$ is increasing. But the converse is not true $($although it is true when $g=f')$.\\
(b) If $g$ is continuous, then $f$ is differentiable $($but the converse is not true$)$ and $g=f'$ on $I^\circ$.\\
Hence, every function has at most one continuous MV-function and the followings are equivalent:\\
- $f$ is continuously differentiable;\\
- $f$ has a continuous MV-function;\\
- $f'$ is the only continuous MV-function of $f$.\\
(c) If $g$ is increasing, then $f$ is convex. But the converse is not true
$($however, if $g$ is continuous then the converse is also valid$)$. \\
(d) If $g$ is incraesing and $0\in I$, then there exist constants $a,b$ such that
 \begin{equation*}
a+bx\leq f(x)\leq a+xg(x)\;\; ;\;\;x\in I^\circ.
 \end{equation*}
  (e) If $g$ is invertible, then $g^{-1}(\frac{f(y)-f(x)}{y-x})$ is a strict mean function on $I$
$($see the next explanation$)$.
\end{lem}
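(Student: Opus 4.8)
The plan is to reduce every assertion to the single defining identity of an MV-function: for distinct $x,y\in I$ there is a point $c_{x,y}$ strictly between $x$ and $y$ with $\frac{f(y)-f(x)}{y-x}=g(c_{x,y})$. Throughout I would write $s(x,y):=\frac{f(y)-f(x)}{y-x}$ for the difference quotient, so that $s(x,y)=g(c_{x,y})$ with $c_{x,y}\in(\min\{x,y\},\max\{x,y\})$. Each of (a)--(e) then becomes a short deduction from this identity together with a standard monotonicity or convexity fact, while each ``converse is not true'' clause is settled by recycling the pathological $g$ from the Example and Theorem above.

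For (a), given $x<y$ I would apply the identity: $f(y)-f(x)=g(c_{x,y})(y-x)$ with $y-x>0$ and $g(c_{x,y})\ge 0$, so $f(y)\ge f(x)$ and $f$ is increasing. The failure of the converse I would exhibit with $f=\mathrm{id}$, for which any $g$ with $g^{-1}(1)$ dense is an MV-function; choosing such a $g$ that is, say, $1$ on the rationals and $-1$ on the irrationals gives an increasing $f$ with an MV-function that is not nonnegative, while the case $g=f'$ is the classical equivalence ``$f'\ge 0\iff f$ increasing''. For (b) I would fix $x\in I^\circ$ and let $y\to x$: since $c_{x,y}$ is squeezed between $x$ and $y$ it tends to $x$, so continuity of $g$ gives $\lim_{y\to x}s(x,y)=g(x)$, i.e.\ $f'(x)=g(x)$. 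The ``at most one continuous MV-function'' statement and the three equivalences are then formal: two continuous MV-functions both equal $f'$; if $f\in C^1$ then $f'$ is a continuous MV-function by the usual MVT; and if some continuous $g$ is an MV-function then $f'=g$ is continuous.

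Parts (c) and (d) are where the genuine work lies. For (c) I would take $x<z<y$ and apply the identity on the two subintervals to get $s(x,z)=g(c_1)$ and $s(z,y)=g(c_2)$ with $c_1\in(x,z)$ and $c_2\in(z,y)$, hence $c_1<c_2$; monotonicity of $g$ then yields the three-chord inequality $s(x,z)\le s(z,y)$, which is exactly the standard criterion for convexity of $f$. The converse fails again on $f=\mathrm{id}$ using a non-monotone MV-function, whereas if $g$ is continuous then $g=f'$ by (b) and convexity of $f$ forces $f'$ increasing. For (d), assuming $0\in I^\circ$, I would set $a:=f(0)$ and $b:=g(0)$. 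For the upper bound, when $x>0$ the MV-point satisfies $c_{0,x}<x$, so $g(c_{0,x})\le g(x)$ and $f(x)-f(0)=g(c_{0,x})x\le g(x)x$; the case $x<0$ is symmetric, the inequality $c_{0,x}>x$ combining with $x<0$ to flip the sign. For the lower bound the same two computations, comparing $g(c_{0,x})$ with $g(0)=b$, give $f(x)-f(0)\ge bx$ on both sides of $0$, and the boundary case $0\in\partial I$ is recovered by replacing $g(0)$ with the one-sided limit $\lim_{t\to 0^+}g(t)$.

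Finally, for (e) the invertibility of $g$ does the whole job: the value $s(x,y)$ lies in the range of $g$ because it equals $g(c_{x,y})$, and injectivity of $g$ forces $g^{-1}(s(x,y))$ to be precisely that unique $c_{x,y}$, which the definition places strictly between $x$ and $y$; hence $(x,y)\mapsto g^{-1}(s(x,y))$ is strictly internal and so a strict mean function. The main obstacle I anticipate is not any single implication but keeping the counterexamples legitimate: each ``converse is not true'' clause needs a $g$ that is simultaneously a bona fide MV-function (so the relevant preimage must be dense, as in the Theorem) and a violator of the monotonicity, sign, or continuity conclusion, so I would be careful to quote the everywhere-surjective construction rather than invent an ad hoc $g$. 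A secondary point to watch is invoking the correct ``increasing chords'' characterization in (c), since the chord inequality must be verified for all intermediate $z$ to yield full, and not merely midpoint, convexity.
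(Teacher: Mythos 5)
Your proposal is correct and follows essentially the same route as the paper's proof: every part is reduced to the defining identity $f(y)-f(x)=g(c_{x,y})(y-x)$ with $c_{x,y}$ strictly between $x$ and $y$, with the same Dirichlet-type counterexample in (a), the same three-chord argument in (c) (you use $f=\mathrm{id}$ with a non-monotone MV-function for the converse, where the paper instead cites Example 3.10's convex $|x|$ having no increasing MV-function at all), and the same choice $a=f(0)$, $b=g(0)$ in (d), which you carry out directly rather than deducing from (c). Two small caveats: you never address the parenthetical ``converse'' in (b) --- the paper disposes of it by taking a differentiable $f$ with discontinuous $f'$, so that $g=f'$ is a non-continuous MV-function of a differentiable function --- and your proposed patch for $0\in\partial I$ in (d) does not quite work, since for an increasing $g$ the one-sided limit at an endpoint can be $-\infty$ (e.g.\ $f(x)=-\sqrt{x}$ on $[0,1]$, where no finite $b$ gives $bx\le f(x)-f(0)$ on $(0,1)$); the paper's own proof, like your main argument, implicitly assumes $0\in I^\circ$ so that $g(0)$ is defined.
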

\begin{proof}
(a) The first part is obvious, and for the second part let $f(t)=t$, and $g(t)$ be $1$ on $\mathbb{Q}$
and $-1$ on $\mathbb{Q}^c$.\\
(b) The first statement is easy to prove, and for disproving the converse consider every differentiable function
$f:\mathbb{R}\rightarrow \mathbb{R}$ such that $g=f'$ is not continuous. Theorem 2.3 proves
the rest of sentences in this part.\\
(c) Let $x<y<z$ be arbitrary elements of $I$ and $c_{x,y}$, $c_{y,z}$ are as mentioned in Definition 2.1.
Then, $x<c_{x,y}<y<c_{y,z}<z$ and $g(c_{x,y})=\frac{f(y)-f(x)}{y-x}$, $g(c_{y,z})=\frac{f(z)-f(y)}{z-y}$.
These relations imply that $f$ is convex if $g$ is increasing (indeed the inequalities $(3.6)$ hold).
The converse is not true by Example 3.10.\\
(d) Putting $a=f(0)$ and $b=g(0)$, one can get the result easily from (c). \\
(e) This is clear by considering the definition.
\end{proof}
\begin{rem}
Up to now, we determined continuous MV-functions of a given function and
will do for increasing (monotone) MV-functions in Theorem 3.9. Hence, we leave the following important problem and questions:\\
{\bf Problem I.} Characterize (or give some necessary and sufficient conditions for existence of) MV-functions
with the intermediate value (Darboux) property, piecewise continuity, left or right continuity, etc.\\
{\bf Question II.} Let $f$ be a real function with one sided derivatives and at most finitely many non-differentiable
points. Then, are there some MV-functions of $f$ with at most finitely many non-continuous
points?. What happens if such points are at most countable? \\
{\bf Question III.} Let $f$ be a non-continuously differentiable function.
Is it true that $f'$ is the only MV-function with the intermediate value property?.
If no, what conditions are needed to be true? \\
\end{rem}

Putting $I\check{\times} I:=I\times I\setminus\{(x,x):x\in I\}$, a function
$\eta:I\check{\times} I\rightarrow I^\circ$ is called ``strict mean function on $I$''
if $\min\{x,y\}<\eta(x,y)<\max\{x,y\}$ , for all $(x,y)\in I\check{\times}I$. Replacing $I\check{\times} I$ by
$I\times I$ and $<$ by $\leq$,  $\eta$ is called ``mean function on $I$''.\\
Let $g$ be a MV-function of $f:I\rightarrow \mathbb{R}$. Then, we can construct functions
 $\eta:I\check{\times} I\rightarrow I^\circ$ such that $\eta(x,y)=c=c_{x,y}$,
 for a value $c$ in the definition (choice functions). Therefore,
 $\eta$ is a strict mean function on $I$ and it can be considered as a symmetric function (i.e. $\eta(x,y)=\eta(y,x)$), and also can be
naturally extended on whole $I\times I$, by defining   $\eta(x,x)=x$ (to be a mean function).

On the other hand, we have the following equivalent conditions:
\begin{equation}
\min\{x,y\}<\eta(x,y)<\max\{x,y\} \Leftrightarrow 0<\frac{\eta(x,y)-x}{y-x}<1 \Leftrightarrow  0<\frac{\eta(x,y)-y}{x-y}<1
\end{equation}
\begin{equation*}
\Leftrightarrow \eta(x,y)=\lambda(x,y)x+(1-\lambda(x,y))y \Leftrightarrow \eta(x,y)=\mu(x,y)y+(1-\mu(x,y))x,
\end{equation*}
where $0<\lambda(x,y)<1$ and $0<\mu(x,y)<1$. Thus,
for every strict mean function $\eta$ there exist functions $\lambda, \mu:I\times I\rightarrow (0,1)$
such that $$\eta(x,y)=y+(x-y)\lambda(x,y))=x+(y-x)\mu(x,y),$$
for all $(x,y)\in I \times I$, and vice versa.
Hence, we arrive at the following proposition.
\begin{prop}
 For a given function $f:I\rightarrow \mathbb{R}$, the following statements are equivalent:\\
 (a) $g$ is a MV-function of $f$.\\
 (b) There exists a $($symmetric$)$ strict mean function $\eta$ on $I$
such that
 \begin{equation*}
 f(y)-f(x)=(y-x)g(\eta(x,y))\;\; ;\;\;(x,y)\in I\times I.
 \end{equation*}
 (c) There exists a $($symmetric$)$ function $\lambda:I\times I\rightarrow (0,1)$
 such that
\begin{equation*}
 f(y)-f(x)=(y-x)g(\lambda(x,y)x+(1-\lambda(x,y))y)\;\; ;\;\;(x,y)\in I \times I
 \end{equation*}
 (d) There exists a $($symmetric$)$ mean function $\eta$ on $I$
such that
  \begin{equation*}
 f(y)-f(x)=(y-x)g(\eta(x,y))\;\; ;\;\;(x,y)\in I \times I.
 \end{equation*}
(e) There exists a $($symmetric$)$ function $\lambda:I\times I\rightarrow (0,1)$
such that
  \begin{equation*}
 f(y)-f(x)=(y-x)g(x+(y-x)\lambda(x,y))\; ;\; x,y\in I, x<y.
 \end{equation*}
 (f) There exists a $($symmetric$)$ function $\lambda:I\times I\rightarrow (0,1)$
 such that
\begin{equation*}
 f(y)-f(x)=(y-x)g(\min\{x,y\}+|y-x|\lambda(x,y))\;\; ;\;\;(x,y)\in I\times I
 \end{equation*}
or equivalently
 \begin{equation*}
 f(y)-f(x)=(y-x)g(\frac{x+y}{2}+(\lambda(x,y)-\frac{1}{2})|y-x|)\;\; ;\;\;(x,y)\in I \times I.
 \end{equation*}
\end{prop}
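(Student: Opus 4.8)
The plan is to establish all equivalences by proving the cycle (a) $\Rightarrow$ (b) $\Rightarrow$ (d) $\Rightarrow$ (a), together with the equivalences (b) $\Leftrightarrow$ (c) $\Leftrightarrow$ (e) $\Leftrightarrow$ (f), since (c), (e) and (f) are merely algebraic reparametrizations of the strict mean function appearing in (b). The guiding observation is that the clause ``$c$ between $x$ and $y$'' in Definition 2.1 is literally the defining inequality $\min\{x,y\}<c<\max\{x,y\}$ of a strict mean function, so the content of (a) and of (b) is the same statement once the choice of $c$ is packaged into a function $\eta$.

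First I would prove (a) $\Rightarrow$ (b). Assuming $g$ is an MV-function, for each unordered pair of distinct points I choose a witness $c_{x,y}$ from Definition 2.1 and set $\eta(x,y)=c_{x,y}$. Because the relation $f(y)-f(x)=g(c)(y-x)$ is invariant under interchanging $x$ and $y$ (both factors change sign), the same witness serves both orders, so $\eta$ is automatically symmetric; and since $\min\{x,y\}<c_{x,y}<\max\{x,y\}$, the map $\eta:I\check{\times}I\to I^\circ$ is a strict mean function. This is exactly the construction already described in the paragraph preceding the proposition. The implication (b) $\Rightarrow$ (d) is the trivial extension to the diagonal: put $\eta(x,x)=x$; the identity $f(y)-f(x)=(y-x)g(\eta(x,y))$ holds at $x=y$ because both sides vanish, upgrading the strict mean function to a mean function. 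Conversely (d) $\Rightarrow$ (a) (and likewise (b) $\Rightarrow$ (a)) is immediate: for distinct $x,y$ the value $c=\eta(x,y)$ lies strictly between $x$ and $y$ and satisfies the MV-equation, so $g$ is an MV-function.

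It remains to see that (c), (e) and (f) say the same thing as (b). For this I would invoke the equivalences collected in (2.1): a point $c$ satisfies $\min\{x,y\}<c<\max\{x,y\}$ if and only if $c=\lambda x+(1-\lambda)y$ for a unique $\lambda=\lambda(x,y)\in(0,1)$, equivalently $c=x+(y-x)\mu$ with $\mu\in(0,1)$, equivalently $c=\min\{x,y\}+|y-x|\lambda$, equivalently $c=\frac{x+y}{2}+(\lambda-\frac12)|y-x|$. Thus from a strict mean function $\eta$ one recovers the corresponding $\lambda$ by solving the linear, hence uniquely solvable, equation $\eta(x,y)=\lambda(x,y)x+(1-\lambda(x,y))y$, and conversely each such $\lambda$ defines a strict mean function through the same formula; symmetry of $\eta$ transfers to the appropriate symmetry of $\lambda$. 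Substituting these identities into $f(y)-f(x)=(y-x)g(\eta(x,y))$ turns (b) into (c), (e) and (f) verbatim, and back.

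I expect the only genuinely delicate point to be bookkeeping rather than mathematics: ensuring the witness can be chosen symmetrically (handled above via the sign-invariance of the MV-equation) and checking the behaviour on the diagonal when $x$ is an endpoint of $I$, where $\eta(x,x)=x$ need not lie in $I^\circ$ and $g(\eta(x,x))$ is formally undefined; here the factor $(y-x)=0$ forces the product to be $0$, so the identity reads $0=0$ and no value of $g$ is actually required. All remaining steps, namely the explicit passages between $\lambda$, $\mu$ and the midpoint form, are the routine linear rearrangements recorded in (2.1).
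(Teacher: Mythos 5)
Your proposal is correct and follows essentially the same route as the paper: the paper's ``proof'' is precisely the discussion preceding the proposition, where $\eta$ is built as a (symmetric) choice function for the witnesses $c_{x,y}$, extended to the diagonal by $\eta(x,x)=x$, and the remaining forms are obtained from the algebraic equivalences recorded in $(2.1)$. Your extra remark about the diagonal at an endpoint of $I$ (where $(y-x)=0$ makes the identity vacuous) is a careful touch but does not change the argument.
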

\subsection{Pointwise MV-functions}
It may be helpful studding the next easier case of MV-functions.
\begin{defn}
Let $f:I\rightarrow \mathbb{R}$ be a given function and fix an element $x_0\in I$. We call
$g:I^\circ\rightarrow \mathbb{R}$ a ``Mean Value function of $f$ at $x_0$" ($x_0$-MV-function) if one of the equivalent conditions (a)-(f)
(in Proposition 2.7) holds, for every two distinct elements $x,y\in I$ with $x=x_0$.
\end{defn}
It is obvious that $g$ is a MV-function of $f$ if and only if it is a $x_0$-MV-function for all $x_0\in I$.
\begin{lem}
All functions of the form
\begin{equation}
g(t)=g_{\mu,x_0}(t):=\frac{\mu(t)}{t-x_0}(f(x_0+\frac{t-x_0}{\mu(t)})-f(x_0)),
 \end{equation}
$($for all $t$ such that $\frac{t-x_0}{\mu(t)}\in I-x_0)$  are $x_0$-MV-functions of $f$,
where $\mu:I\rightarrow (0,1)$ satisfies the functional equation
\begin{equation}
 \mu(t\mu(t)-x_0\mu(t)+x_0)=\mu(t)\;\; ;\;\;t\in I.
 \end{equation}
In particular, putting  $\mu(t)=\mu$ where $\mu\in(0,1)$,
we obtain infinitely many such functions,
and specially $g_{\frac{1}{2},x_0}(t)=\frac{f(2t-x_0)-f(x_0)}{2t-2x_0}$, if $\mu=\frac{1}{2}$.
\end{lem}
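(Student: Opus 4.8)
The plan is to verify directly that each function $g_{\mu,x_0}$ meets the defining property of an $x_0$-MV-function (Definition 2.8): namely, that for every $y\in I$ with $y\neq x_0$ there exists a point $c$ lying strictly between $x_0$ and $y$ for which $f(y)-f(x_0)=g_{\mu,x_0}(c)(y-x_0)$ (this is condition (e) of Proposition 2.7 with $x=x_0$, and since $\mu$ takes values in $(0,1)$ the chosen $c$ will sit strictly between the two points regardless of the sign of $y-x_0$). The candidate for $c$ is forced by the shape of $(2.2)$: the factor $\frac{t-x_0}{\mu(t)}$ inside $f$ suggests that $c$ should satisfy $x_0+\frac{c-x_0}{\mu(c)}=y$, equivalently $c-x_0=\mu(c)(y-x_0)$. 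Because $c$ occurs on both sides through $\mu(c)$, this is an implicit equation for $c$, and resolving it is precisely where the functional equation $(2.3)$ enters.

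First I would make the selection explicit by setting $c:=x_0+\mu(y)(y-x_0)$. Since $\mu(y)\in(0,1)$, this $c$ lies strictly between $x_0$ and $y$, hence $c\in I^\circ$. Next I would observe that $c=y\mu(y)-x_0\mu(y)+x_0$ is exactly the argument appearing on the left-hand side of $(2.3)$ with $t=y$, so the functional equation yields $\mu(c)=\mu(y)$. Substituting this back gives $c-x_0=\mu(y)(y-x_0)=\mu(c)(y-x_0)$, and therefore $\frac{c-x_0}{\mu(c)}=y-x_0$ and $x_0+\frac{c-x_0}{\mu(c)}=y$. In particular $\frac{c-x_0}{\mu(c)}=y-x_0\in I-x_0$, so $c$ lies in the domain on which $(2.2)$ defines $g_{\mu,x_0}$.

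With this $c$ the verification reduces to one substitution into $(2.2)$:
$$g_{\mu,x_0}(c)\,(y-x_0)=\frac{\mu(c)(y-x_0)}{c-x_0}\Big(f\big(x_0+\tfrac{c-x_0}{\mu(c)}\big)-f(x_0)\Big)=\frac{c-x_0}{c-x_0}\big(f(y)-f(x_0)\big)=f(y)-f(x_0),$$
where I used $\mu(c)(y-x_0)=c-x_0$ and $x_0+\frac{c-x_0}{\mu(c)}=y$ from the previous step. This shows $g_{\mu,x_0}$ is an $x_0$-MV-function of $f$. For the concluding assertions I would note that a constant $\mu(t)\equiv\mu\in(0,1)$ satisfies $(2.3)$ trivially (both sides equal $\mu$), so each such $\mu$ yields one of these functions and thus infinitely many; and the displayed formula for $g_{\frac12,x_0}$ follows by putting $\mu=\frac12$ in $(2.2)$ and simplifying $x_0+\frac{t-x_0}{1/2}=2t-x_0$.

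The main obstacle is conceptual rather than computational: one must recognize that $(2.3)$ is exactly the self-consistency condition that makes the implicit requirement $c-x_0=\mu(c)(y-x_0)$ solvable by the explicit choice $c=x_0+\mu(y)(y-x_0)$, via $\mu(c)=\mu(y)$. Once the functional equation is read in this way, every remaining step is a single substitution; the only additional point to watch is the domain condition $\frac{t-x_0}{\mu(t)}\in I-x_0$, which is automatically satisfied by the chosen $c$.
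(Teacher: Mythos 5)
Your proposal is correct and follows essentially the same route as the paper: both take the candidate intermediate point $c=x_0+\mu(y)(y-x_0)$ (the paper's $z=\mu(t)t+(1-\mu(t))x_0$), use the functional equation $(2.3)$ to get $\mu(c)=\mu(y)$, and substitute into $(2.2)$ to recover the difference quotient. Your write-up is in fact more explicit than the paper's, which leaves the invocation of $(2.3)$ and the betweenness of $c$ implicit.
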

\begin{proof}
For every $t\in I\setminus \{x_0\}$ put $z=z_{t,x_0}=\mu(t)t+(1-\mu(t))x_0$, then
$$
\frac{z-x_0}{\mu(t)}=t-x_0\in I-x_0,
$$
so $g(z)$ is defined and we have $g(z)=\frac{f(t)-f(x_0)}{t-x_0}$
\end{proof}
A question arises that when a $x_0$-MV-function is the same $f'$. For the case
$x_0=0$ and $\mu(t)=\mu$ it means that
\begin{equation*}
f'(t)=\frac{\mu}{t}(f(\frac{t}{\mu})-f(0))\;\; ;\;\;0\neq t\in \mu I,
 \end{equation*}
and we arrive at the differential equation
\begin{equation}
tf'(t)-\mu f(\frac{t}{\mu})+\mu f(0)=0\;\; ;\;\;t\in \mu I.
 \end{equation}
The related integral equation is
 \begin{equation}
\int^x\frac{f(\frac{1}{\mu}t)}{t}dt=\frac{1}{\mu}f(x)+f(0)\log|x|\; ; \; x\neq 0.
 \end{equation}
that is interesting and may be useful for the topic. It is obvious that all functions $f(t)=kt$ enjoy the properties.
\section{Comparative convex and concave functions}
We defined the MV-functions via replacing $f'$ by an arbitrary function $g$ satisfying the equation.
On the other hand a function $f$ is convex on $I$ if and only if $f(x)\geq f(y)+f'(y)(x-y)$, for all
$x,y \in I$. Therefore, we are led to the following definition.
\begin{defn}
Let $g:I\rightarrow \mathbb{R}$ be a given function. We call
$f:I\rightarrow \mathbb{R}$
``$g$-compared convex'' if $f(x)\geq f(y)+g(y)(x-y)$, for all $x,y \in I$.
Here $g$ is called ``difference quotient bound'' of $f$ (see the inequality $(3.6)$).
 Analogously, ``strictly $g$-compared convex'' and ``$g$-compared concave'' functions are defined.
\end{defn}
As a geometric interpretation, $f$ is $g$-compared convex if and only if for every point
$x_0\in I$ there is a line of support of $f$ with the slope $g(x_0)$.
If $f$ is a differentiable convex function, then it is $f'$-compared convex. More generally, every
convex function is $f'_{\pm}$-compared convex. It is clear that $f$ is $g$-compared concave if and only if
$-f$ is  $-g$-compared convex. Also, $f$ is $0$-compared convex if and only if
$f$ is constant.\\
{\bf Convention.} For comfort, in this paper,  we use ``$g$-convex'' instead of ``$g$-compared convex''.
\begin{ex}
The absolute value function is $\sgn$-convex. Indeed $|.|$ is $g$-convex if and only if
there exists $-1\leq \alpha\leq 1$ such that
\begin{equation}
g(x) = \begin{cases}
             \sgn(x)  & \text{if } x\neq 0 \\
             \alpha  & \text{if } x=0
       \end{cases}
 \end{equation}

Conversely, $f$ is $\sgn$-convex if and only if $f=|.|+c$ for some constant $c$ (also see Theorem 3.8) .
\end{ex}
\begin{prop}
 A function $f:I\rightarrow \mathbb{R}$ is $g$-convex if and only if one of
 the following equivalent conditions hold:\\
\begin{equation}
 g(x)(y-x)\leq f(y)-f(x)\leq g(y)(y-x)\;\; ;\;\; x,y \in I.
 \end{equation}
\begin{equation}
f(\lambda x+(1-\lambda)y)\leq f(y)+\lambda g(x)(x-y)\;\; ;\;\; x,y \in I,\; \lambda \in [0,1].
 \end{equation}
\begin{equation}
 g(x)\leq \frac{f(y)-f(x)}{y-x}\leq g(y)\;\; ;\;\; x,y \in I,\; x<y.
 \end{equation}
\begin{equation}
f(\lambda x+(1-\lambda)y)\leq f(x)+(1-\lambda)g(y)(y-x)\;\; ;\;\; x,y \in I,\; \lambda \in [0,1].
 \end{equation}
  \end{prop}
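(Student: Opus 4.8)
The plan is to prove a cycle of equivalences connecting the defining inequality of $g$-convexity, $f(x)\ge f(y)+g(y)(x-y)$ for all $x,y\in I$, to the four displayed conditions; since they will then all be equivalent to one another, it suffices to test each one against the definition. First I would dispatch the purely formal equivalences. Read with base point $x$, the definition says $f(y)\ge f(x)+g(x)(y-x)$, i.e.\ $f(y)-f(x)\ge g(x)(y-x)$, which is the left half of (3.6); read with base point $y$ and rearranged, it says $f(y)-f(x)\le g(y)(y-x)$, the right half of (3.6). As the definition is quantified over all ordered pairs, these two halves together are exactly (3.6), so the definition $\Leftrightarrow$ (3.6). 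Dividing by $y-x>0$ for $x<y$ (the case $x>y$ following by interchanging the names) turns (3.6) into (3.8) and back, so (3.6) $\Leftrightarrow$ (3.8). I would record the key by-product: in (3.8) the quotient $\frac{f(y)-f(x)}{y-x}$ lies between $g(x)$ and $g(y)$ whenever $x<y$, so $g$ is non-decreasing on $I$.

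The substantive step is to obtain the convex-combination forms (3.7) and (3.9). Setting $z=\lambda x+(1-\lambda)y$, one has $z-y=\lambda(x-y)$, so (3.7) is the same as $f(z)-f(y)\le g(x)(z-y)$; similarly $z-x=(1-\lambda)(y-x)$ makes (3.9) the same as $f(z)-f(x)\le g(y)(z-x)$. The substitution $x\leftrightarrow y$, $\lambda\leftrightarrow 1-\lambda$ fixes $z$ and interchanges the two, so it is enough to prove (3.9) from (3.8). For $x<y$ and $\lambda\in(0,1)$ the point $z$ lies in $(x,y)$; applying (3.8) to the pair $(x,z)$ gives $\frac{f(z)-f(x)}{z-x}\le g(z)$, and monotonicity of $g$ together with $z\le y$ gives $g(z)\le g(y)$, whence $f(z)-f(x)\le g(z)(z-x)\le g(y)(z-x)$, which is (3.9). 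The endpoints $\lambda\in\{0,1\}$, and the case $x>y$ (where the pair $(z,x)$ is used and the signs reverse), are handled by the same inequality with the signs tracked; (3.7) then follows from the symmetry above.

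To close the loop I would recover the definition from (3.7) and from (3.9) by specializing $\lambda$. Taking $\lambda=0$ in (3.9) gives $f(y)\le f(x)+g(y)(y-x)$, which rearranges to the defining inequality $f(x)\ge f(y)+g(y)(x-y)$; taking $\lambda=1$ in (3.7) gives $f(x)\le f(y)+g(x)(x-y)$, the same inequality after renaming $x\leftrightarrow y$. Hence each of (3.7) and (3.9) implies $g$-convexity, and all five statements are equivalent.

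The main obstacle is the passage from (3.6)/(3.8) to (3.7)/(3.9): this is the only place where a genuine property of $g$ — its monotonicity, itself extracted from (3.8) — is used rather than a formal rearrangement, and it requires careful sign bookkeeping, since the factors $z-x$ and $z-y$ change sign according to whether $x<y$ or $x>y$, and the monotonicity inequality for $g$ must be applied in the matching direction.
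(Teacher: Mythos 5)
Your proposal is correct and follows essentially the same route as the paper: both pass from the definition to the two-sided difference inequality and its divided form, then obtain the convex-combination inequalities by applying that bound at the intermediate point $z=\lambda x+(1-\lambda)y$ (the paper via a single four-term chain through $g(z)$, you via the extracted monotonicity of $g$ plus the symmetry $x\leftrightarrow y$, $\lambda\leftrightarrow 1-\lambda$), and finally close the loop by specializing $\lambda$ to an endpoint. The differences are only organizational, not mathematical.
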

  \begin{proof}
If $f$ is $g$-convex, then by changing the roles of $x$, $y$ in its inequality, and combining the two inequalities
we arrive at $(3.2)$. Now, let $x<y$ in $I$ and consider $0<\lambda<1$. If $(3.2)$  holds, then
$$
g(x)\leq \frac{f(\lambda x+(1-\lambda)y)-f(x)}{(1-\lambda)(y-x)}\leq g(\lambda x+(1-\lambda)y)\leq
\frac{f(y)-f(\lambda x+(1-\lambda)y)}{\lambda(y-x)}\leq g(y)
$$
Thus, we obtain two inequalities $$f(\lambda x+(1-\lambda)y)\leq f(y)+\lambda g(x)(x-y)$$ and
$$f(\lambda x+(1-\lambda)y)\leq f(x)+(1-\lambda)g(y)(y-x),$$ for $x<y$ and $0<\lambda<1$. By changing the roles
of $x$, $y$ and replacing $1-\lambda$ by $\lambda$ in the second inequality, we drive the first inequality
for $y<x$ and all $0<\lambda<1$. Also, $(3.3)$ holds for $\lambda=0,1$ obviously. Hence, we arrive at $(3.3)$
for all $x,y \in I$ and $\lambda \in [0,1]$. \\
Now, if $(3.3)$ holds, then putting $\lambda=1$ we conclude that $f$ is $g$-convex and so $(3.2)$
is valid, thus we deduce $(3.4)$. Notice that $(3.4)$$\Rightarrow$$(3.5)$ is similar to $(3.2)$$\Rightarrow$$(3.3)$, and
$(3.5)$ implies $g$-convexity of $f$ by putting $\lambda=0$. Therefore, the proof is complete.
\end{proof}
\begin{rem}
It is easy to show that if $I$ is open or $f$ is continuous, then we can reduce the condition $\lambda \in [0,1]$
to $\lambda \in (0,1)$. Now, the question is that wether it is valid without the extra conditions?, i.e.,
does $f(\lambda x+(1-\lambda)y)\leq f(x)+(1-\lambda)g(y)(y-x)$ for $0<\lambda<1$ (on $I$) imply that
$f$ is $g$-convex?
\end{rem}
An important result of Proposition 3.3 is
\begin{equation}
 g(x)\leq \frac{f(y)-f(x)}{y-x}\leq g(y)\leq \frac{f(z)-f(y)}{z-y}\leq g(z),
\end{equation}
for all $x<y<z$ in $I$. Hence, we arrive at the following important corollary.
\begin{cor}
A function $f$ is convex if and only if
it is $g$-convex for some $g$.
Also, if $f$ is $g$-convex, the $f$ is convex and $g$ is increasing.
\end{cor}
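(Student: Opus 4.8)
The plan is to extract both assertions directly from the chain $(3.6)$, which is itself an immediate consequence of Proposition 3.3. I would begin with the implication ``$f$ is $g$-convex $\Rightarrow$ $g$ is increasing and $f$ is convex'', since this is the easy half. For the monotonicity of $g$, condition $(3.4)$ already gives, for every $x<y$ in $I$, the string $g(x)\leq \frac{f(y)-f(x)}{y-x}\leq g(y)$; comparing the two outer terms yields $g(x)\leq g(y)$, so $g$ is increasing. For the convexity of $f$, I would use the middle part of $(3.6)$: for all $x<y<z$ in $I$ it forces $\frac{f(y)-f(x)}{y-x}\leq \frac{f(z)-f(y)}{z-y}$, that is, the secant slopes of $f$ are nondecreasing. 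This monotone-slope condition is the classical characterization of convexity, so $f$ is convex.

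It then remains to prove the only genuinely new direction, ``$f$ convex $\Rightarrow$ $f$ is $g$-convex for some $g$'', which I would settle by exhibiting an explicit $g$. Since $f$ is convex on the interval $I$, its one-sided derivatives $f'_+$ and $f'_-$ exist at every interior point and satisfy, for $x<y$, the standard bounds $f'_+(x)\leq \frac{f(y)-f(x)}{y-x}\leq f'_-(y)$ together with $f'_-(t)\leq f'_+(t)$. I would set $g:=f'_+$ and verify the defining inequality $f(x)\geq f(y)+g(y)(x-y)$ of Definition 3.1 by a short case split on the position of $x$ relative to $y$. When $x>y$ the inequality is equivalent to $\frac{f(x)-f(y)}{x-y}\geq f'_+(y)$, which holds because the secant slopes to the right of $y$ dominate $f'_+(y)$; when $x<y$ it reduces, after dividing by the negative quantity $x-y$, to $\frac{f(y)-f(x)}{y-x}\leq f'_+(y)$, which follows from $\frac{f(y)-f(x)}{y-x}\leq f'_-(y)\leq f'_+(y)$. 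Hence $f$ is $f'_+$-convex, which closes the equivalence.

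I do not anticipate a serious obstacle, as the whole corollary is bookkeeping once Proposition 3.3 and the elementary theory of one-sided derivatives of convex functions are available. The only point deserving care is the definition of $g$ at the endpoints of $I$, where just one of $f'_{\pm}$ exists; there I would simply take the available one-sided derivative, so that $g$ is defined on all of $I$ and admits a supporting line at every point. With $g$ so specified the case analysis above applies verbatim, and both halves of the statement follow.
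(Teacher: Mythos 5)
Your proposal is correct and follows essentially the route the paper intends: the forward direction is read off from the slope chain $(3.6)$ (outer terms give monotonicity of $g$, the middle inequality gives monotone secant slopes and hence convexity), and the converse is the paper's earlier observation that every convex function is $f'_{\pm}$-compared convex, which you verify in detail with $g=f'_+$. The only caveat is your endpoint fix: at an endpoint of a non-open $I$ the available one-sided derivative of a convex function can be infinite (e.g.\ $f(x)=-\sqrt{x}$ on $[0,1]$), in which case no finite supporting slope exists at that point; this gap is inherited from the corollary's statement itself, which the paper only repairs later by assuming $I$ open.
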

Now, we are ready to prove the essential relations between $f$ and $g$ for $g$-convex functions $f$. It together with
Proposition 3.3 enable us to characterize them in several ways.

\begin{theorem}
If $f:I\rightarrow \mathbb{R}$ is $g$-convex, then\\
\begin{equation*}
f'_-(x)=g(x^-)\leq g(x)\leq g(x^+)=f'_+(x)\;\; ;\;\; x \in I^\circ.
 \end{equation*}
Hence, $f$ is differentiable (on $I^\circ$) if and only if $g$ is continuous, and if this is the case,
then $g=f'$ on $I^\circ$.
\end{theorem}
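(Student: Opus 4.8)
The plan is to exploit the two facts already supplied by the Corollary above: a $g$-convex $f$ is convex, so that its one-sided derivatives $f'_-(x)$ and $f'_+(x)$ exist at every interior point and $f$ is continuous on $I^\circ$; and $g$ is increasing, so that the one-sided limits $g(x^-)$ and $g(x^+)$ exist with $g(x^-)\le g(x)\le g(x^+)$. The two middle inequalities in the statement are therefore immediate, and the entire content reduces to the two boundary equalities $f'_+(x)=g(x^+)$ and $f'_-(x)=g(x^-)$.

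First I would establish $f'_+(x)=g(x^+)$ by squeezing from both sides. Fixing an interior $x$, the right-hand inequality of $(3.4)$ applied to the pair $(x,y)$ gives $\frac{f(y)-f(x)}{y-x}\le g(y)$ for $y>x$, and letting $y\to x^+$ yields $f'_+(x)\le g(x^+)$. For the reverse inequality a single limit is insufficient, and this iterated-limit step is the only genuine obstacle. I would fix $z>x$ and apply the \emph{left}-hand inequality of $(3.4)$ to the pair $(w,z)$ with $x<w<z$, obtaining $g(w)\le \frac{f(z)-f(w)}{z-w}$; letting $w\to x^+$ and invoking continuity of $f$ gives $g(x^+)\le \frac{f(z)-f(x)}{z-x}$, and then letting $z\to x^+$ gives $g(x^+)\le f'_+(x)$. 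Combining the two bounds yields the equality.

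The left-hand equality $f'_-(x)=g(x^-)$ follows by the mirror-image argument: the left inequality of $(3.4)$ for $y<x$ gives $g(x^-)\le f'_-(x)$ after $y\to x^-$, while fixing $z<x$ and applying the right inequality of $(3.4)$ to the pair $(z,w)$ with $z<w<x$, then sending $w\to x^-$ and afterwards $z\to x^-$, gives $f'_-(x)\le g(x^-)$.

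Finally, the differentiability criterion drops out of the chain $f'_-(x)=g(x^-)\le g(x)\le g(x^+)=f'_+(x)$. Because $g$ is increasing, $g$ is continuous at $x$ precisely when $g(x^-)=g(x^+)$, and $f$ is differentiable at $x$ precisely when $f'_-(x)=f'_+(x)$; the displayed chain shows these two conditions coincide, and when they hold the common value forces $g(x)=f'(x)$. Quantifying over all $x\in I^\circ$ then gives the stated equivalence of differentiability of $f$ with continuity of $g$, together with the identity $g=f'$.
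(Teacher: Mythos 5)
Your proof is correct, and it reaches the crucial equalities $f'_+(x)=g(x^+)$ and $f'_-(x)=g(x^-)$ by a genuinely different device than the paper. Both arguments share the same skeleton: from Corollary 3.5 one gets convexity of $f$ and monotonicity of $g$, the middle inequalities $g(x^-)\le g(x)\le g(x^+)$ are automatic, the ``easy'' bounds $f'_+(x)\le g(x^+)$ and $g(x^-)\le f'_-(x)$ fall out of $(3.4)$ by a single one-sided limit, and everything reduces to the reverse estimate $\frac{f(z)-f(x)}{z-x}\ge g(x^+)$ for $z>x$. For that estimate the paper introduces the auxiliary function $h(t)=f(t)-\frac{f(y)-f(x)}{y-x}(t-x)$, locates a minimizer $t_0\in(x,y]$, and combines $h'_-(t_0)\le 0$ with $f'_-(t_0)\ge g(t_0^-)\ge g(x^+)$ --- a Rolle-type argument. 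You instead take the iterated limit $w\to x^+$ followed by $z\to x^+$ in $g(w)\le\frac{f(z)-f(w)}{z-w}$, invoking continuity of the convex function $f$ at interior points. Your route is more elementary (no auxiliary function, no appeal to the existence of an extremum) and its logical structure is cleaner: it correctly identifies which outer inequalities are the easy ones, whereas the first displayed chain in the paper's proof writes those outer inequalities in the direction that actually requires the hard argument. Neither approach is more general than the other --- the paper also tacitly uses continuity of $f$ on $I^\circ$ to guarantee that $h$ attains its minimum --- and the concluding equivalence between differentiability of $f$ and continuity of $g$ is handled the same way in both.
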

\begin{proof}
The inequality $(3.6)$ together with increasingness of $g$ imply that  $f'_-(x)\leq g(x^-)\leq g(x)\leq g(x^+)\leq f'_+(x)$.
But we claim that $f'_+(x)=g(x^+)$ and $f'_-(x)=g(x^-)$.
For fixed $x < y$ in $I^\circ$ consider the function
$$
 h(t) = f(t) - \frac{f(y)-f(x)}{y-x}(t-x)
$$
which has the left derivative
$$
h'_-(t) = f'_-(t) - \frac{f(y)-f(x)}{y-x} \, .
$$
We have $h(x) = h(y)$ so that $h$ attains its minimum on the interval $[x, y]$ at some point $t_0$ with $x < t_0 \le y$. Then $h'_-(t_0) \le 0$ so that
$$
 \frac{f(y)-f(x)}{y-x} = f'_-(t_0) - h'_-(t_0) \ge  f'_-(t_0) \ge g(t_0^-) \ge g(x^+)  \, .
$$
Taking the limit $y \to x^+$ we conclude that $f'_+(x) \ge g(x^+)$. This proves the first equality, the second one can be proved in the same way.
\end{proof}
The first corollary states a necessary and sufficient condition on $(f,g)$ for $g$-convexity of $f$.
\begin{cor}
A function $f:I\rightarrow \mathbb{R}$ is $g$-convex if and only if it is
convex and $f'_-\leq g\leq f'_+$ on $I^\circ$.
Hence, every differentiable convex function has a unique difference quotient bound function (that is $f'$).
\end{cor}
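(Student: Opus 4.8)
The plan is to prove the two directions of the equivalence in turn and then read off the uniqueness statement, leaning on the results already established in this section. \emph{Necessity} is almost free: if $f$ is $g$-convex, then Corollary 3.5 already tells us that $f$ is convex, so only the two-sided bound on $g$ remains to be checked. But this is precisely the content of Theorem 3.6, which gives $f'_-(x)=g(x^-)\le g(x)\le g(x^+)=f'_+(x)$ on $I^\circ$; discarding the one-sided limits leaves $f'_-\le g\le f'_+$ there.

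\emph{Sufficiency} is the substantive direction. Assume $f$ is convex and $f'_-\le g\le f'_+$ on $I^\circ$. I would aim to verify inequality $(3.4)$ of Proposition 3.3, that is $g(x)\le \frac{f(y)-f(x)}{y-x}\le g(y)$ for $x<y$, since by that proposition it is equivalent to $g$-convexity. Fixing $x<y$ in $I^\circ$ and using the standard monotonicity of the difference quotients of a convex function (secant slopes increase, so $f'_+(x)\le \frac{f(y)-f(x)}{y-x}\le f'_-(y)$), I can insert the hypotheses $g(x)\le f'_+(x)$ and $f'_-(y)\le g(y)$ to obtain the single chain
\[
g(x)\le f'_+(x)\le \frac{f(y)-f(x)}{y-x}\le f'_-(y)\le g(y),
\]
which is exactly $(3.4)$. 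Proposition 3.3 then returns $g$-convexity.

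The uniqueness claim follows by specialization. If $f$ is differentiable and convex, then $f'_-=f'_+=f'$ on $I^\circ$, so the sandwich $f'_-\le g\le f'_+$ collapses to $g=f'$; conversely $f'$ itself meets the bound, so $f$ is $f'$-convex by the sufficiency direction. Hence $f'$ is the one and only difference quotient bound function.

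The step I expect to need the most care is the behaviour at the endpoints of $I$. The hypothesis constrains $g$ only on $I^\circ$, whereas $g$-convexity (equivalently $(3.4)$) must hold for all $x,y\in I$. At a finite endpoint $b$, letting $x\to b^-$ in $(3.4)$ forces $g(b)\ge f'_-(b)$, a condition not literally contained in the interior hypothesis, so the clean chain above settles only interior pairs. I would resolve this either by taking $I$ open (in which case nothing further is needed) or by supplementing the statement with the corresponding one-sided requirement at each endpoint and recovering those cases by the same secant-slope estimate in the limit.
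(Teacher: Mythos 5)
Your proof is correct and follows exactly the route the paper intends: necessity from Corollary 3.5 together with Theorem 3.6, and sufficiency by inserting the hypothesis $f'_-\leq g\leq f'_+$ into the monotone secant-slope chain of a convex function to recover inequality $(3.4)$ of Proposition 3.3 (the paper states the corollary without writing this out). Your closing observation about finite endpoints is a genuine subtlety the paper glosses over; the statement is clean only for open $I$ or with the corresponding one-sided conditions added at the endpoints, as you note.
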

Now, we are ready to state some characterizations of $g$-convex and their difference quotient bound functions.
\begin{theorem} Assume that the interval $I$ is open.\\
(a) $($A characterization of $g$-convex functions$)$.
Let $g:I\rightarrow \mathbb{R}$ be a given increasing function. Then,
$f:I\rightarrow \mathbb{R}$ is $g$-convex if and only if there is a constant $c\in I$
such that for all $x\in I$,
\begin{equation}
f(x)-f(c)=\int_c^x g(t)dt.
 \end{equation}
(b) $($A characterization of difference quotient bounds of $f)$.\\
Let $f:I\rightarrow \mathbb{R}$ be a given convex function. Then,
$g:I\rightarrow \mathbb{R}$ is a difference quotient bound of $f$ if and only if there exists
$J\subseteq I$ such that $f$ is differentiable on $J$,  $I\setminus J$ is at most countable
$($thus $I\setminus J=\{c_n\}_{n=1}^N$ or $\emptyset$, for some $N\in \mathbb{N})$
and
\begin{equation}
g(x) = \begin{cases}
             f'(x)  & \text{if } x\in J \\
             d_n  & \text{if } x=c_n\in I\setminus J
       \end{cases}
 \end{equation}
 for some $d_n\in [f'_-(c_n),f'_+(c_n)]$.
\end{theorem}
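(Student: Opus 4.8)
The plan is to derive both characterizations from the bracketing $f'_-\le g\le f'_+$ already established in Corollary 3.7, combined with the classical regularity theory of one-dimensional convex functions. The only genuinely new analytic input is the fundamental theorem of calculus for convex functions; everything else is a formal consequence of the earlier results. Throughout I use that $I$ is open, so $I^\circ=I$ and no endpoint complications arise.

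For part (a) I would argue the two directions separately. \textbf{Reverse direction.} Suppose $f(x)-f(c)=\int_c^x g(t)\,dt$ with $g$ increasing. Since a primitive of an increasing function is convex, $f$ is convex. Because $g$ is monotone, the average $\frac1h\int_x^{x+h}g(t)\,dt$ tends to the right limit $g(x^+)$ as $h\to 0^+$, and the analogous left average tends to $g(x^-)$; hence $f'_+(x)=g(x^+)$ and $f'_-(x)=g(x^-)$. Monotonicity also gives $g(x^-)\le g(x)\le g(x^+)$, so $f'_-\le g\le f'_+$ on $I$, and Corollary 3.7 shows $f$ is $g$-convex. \textbf{Forward direction.} Conversely, if $f$ is $g$-convex then $f$ is convex (Corollary 3.5) and $f'_-(x)=g(x^-)\le g(x)\le g(x^+)=f'_+(x)$ by Theorem 3.6. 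A convex function on an open interval is locally Lipschitz, hence locally absolutely continuous, so $f(x)-f(c)=\int_c^x f'(t)\,dt$ for every fixed $c\in I$, where $f'$ exists almost everywhere. I would then note that wherever $f$ is differentiable one has $g(x^-)=f'_-(x)=f'(x)=f'_+(x)=g(x^+)$, forcing $g(x)=f'(x)$ there; since a convex $f$ is non-differentiable on at most a countable, hence null, set, we get $g=f'$ almost everywhere and therefore $\int_c^x g=\int_c^x f'=f(x)-f(c)$.

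For part (b) both implications follow directly from Corollary 3.7 once the differentiability structure of $f$ is in place. Let $J$ be the set of points of $I$ at which the convex function $f$ is differentiable; then $I\setminus J$ is at most countable and $f'_-(x)=f'_+(x)=f'(x)$ for $x\in J$. If $g$ is a difference quotient bound of $f$, Corollary 3.7 gives $f'_-\le g\le f'_+$, so $g=f'$ on $J$, while on $I\setminus J=\{c_n\}$ we get $g(c_n)\in[f'_-(c_n),f'_+(c_n)]$; this is precisely the stated form with $d_n:=g(c_n)$. Conversely, any $g$ of that form satisfies $f'_-\le g\le f'_+$ throughout $I$ (trivially on $J$, and by the hypothesis $d_n\in[f'_-(c_n),f'_+(c_n)]$ at the exceptional points), so Corollary 3.7 again makes $f$ into a $g$-convex function.

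The main obstacle is the measure-theoretic step in the forward direction of (a): one must justify $f(x)-f(c)=\int_c^x f'(t)\,dt$ for convex $f$ and then upgrade the pointwise identity $g=f'$ (valid at points of differentiability) to an almost-everywhere identity that survives integration. This rests on the local Lipschitz/absolute continuity of convex functions and on the fact that a convex function fails to be differentiable only on a countable, hence Lebesgue-null, set. Both facts are classical, but this is exactly where genuine analysis, rather than the formal machinery of Corollary 3.7, enters. By contrast, the monotone-function limits $\frac1h\int_x^{x+h}g\to g(x^\pm)$ used in the reverse direction are elementary and present no difficulty.
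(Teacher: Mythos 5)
Your proposal is correct and follows essentially the same route as the paper: both rest on Corollary 3.7 (the bracketing $f'_-\le g\le f'_+$), the countability of the non-differentiability set of a convex function, and the integral representation $f(x)-f(c)=\int_c^x f'_\pm=\int_c^x g$. You simply spell out the converse directions and the measure-theoretic justification that the paper dismisses with ``one can complete the proof easily.''
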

\begin{proof}
The above corollary implies that ($f$ is convex and so) $g=f'$ on $I$
except possibly in at most countable points
(i.e., there exists
$J\subseteq I$ such that $f$ is differentiable on $J$,  $I\setminus J$ is at most countable
and thus $I\setminus J=\{c_n\}_{n=1}^N$ or $\emptyset$, for some $N\in \mathbb{N}$).
Also, we have $f'_-\leq g\leq f'_+$ on $I$. Therefore,
there exists $c\in I$ such that
$$
f(x)-f(c)=\int_c^x f'_+(t)dt=\int_c^x f'_-(t)dt=\int_c^x g(t)dt.
$$
Now, one can complete the proof easily.
\end{proof}
\subsection{Relations with MV-functions}
The following theorem says that if a function has an increasing MV-function $g$,
then it is $g$ convex, and characterizes such functions $g$. Hence, it can be considered as a
generalization of a famous fact about real convex functions.
\begin{theorem}
If $f:I\rightarrow \mathbb{R}$ has an increasing MV-function $g$, then it
is $g$-convex $($but the converse is not true$)$, hence $f'_-(x)=g(x^-)\leq g(x)\leq g(x^+)=f'_+(x)$
$($on $x \in I^\circ)$ and the inequalities  $(3.2)$ till $(3.6)$ hold.
\end{theorem}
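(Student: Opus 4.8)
The plan is to reduce the assertion to the characterization of $g$-convexity already furnished by Proposition 3.3. Recall that, by that proposition, $f$ is $g$-convex precisely when the two-sided estimate $(3.4)$, namely $g(x)\le \frac{f(y)-f(x)}{y-x}\le g(y)$ for all $x<y$ in $I$, holds. Thus the entire weight of the theorem rests on verifying this single inequality; once it is in hand, the derivative relations and the chain $(3.2)$--$(3.6)$ follow from the results already proved.

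To establish $(3.4)$, I would fix $x<y$ in $I$ and invoke the hypothesis that $g$ is an MV-function of $f$. By Definition 2.1 there is a point $c=c_{x,y}$ with $x<c<y$ and $f(y)-f(x)=g(c)(y-x)$, that is, $g(c)=\frac{f(y)-f(x)}{y-x}$. Since $g$ is increasing and $x<c<y$, monotonicity yields $g(x)\le g(c)\le g(y)$, which is exactly $(3.4)$. This is the same squeezing already exploited in the proof of Lemma 2.5(c), so $f$ is $g$-convex.

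With $g$-convexity secured, Theorem 3.6 immediately gives $f'_-(x)=g(x^-)\le g(x)\le g(x^+)=f'_+(x)$ on $I^\circ$, and the equivalences of Proposition 3.3 deliver inequalities $(3.2)$ through $(3.5)$, while $(3.6)$ is obtained by applying $(3.4)$ to the two pairs $(x,y)$ and $(y,z)$ for $x<y<z$ and inserting $g(y)$ between the two quotients. For the failure of the converse I would exhibit $f(x)=|x|$ on $I=\mathbb{R}$ with $g=\sgn$: by Example 3.2 the map $|\cdot|$ is $\sgn$-convex and $\sgn$ is increasing, yet $\sgn$ is not an MV-function of $|\cdot|$, since for a suitable pair $x<0<y$ the difference quotient $\frac{|y|-|x|}{y-x}$ takes a value in $(-1,1)$ (for instance $\frac{1}{3}$ when $x=-1$, $y=2$), which $\sgn$ never attains on $(x,y)$, where it assumes only the values $-1,0,1$.

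I do not anticipate a serious obstacle here: the argument is essentially a one-line application of the intermediate point $c$ supplied by the MV-function property, followed by monotone squeezing. The only point requiring care is bookkeeping---confirming that the $c$ furnished by Definition 2.1 lies strictly between $x$ and $y$ (so that monotonicity applies) and that the resulting inequality is literally condition $(3.4)$ rather than a weaker variant; both are routine once the definitions are lined up.
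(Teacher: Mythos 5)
Your argument is correct and follows essentially the same route as the paper, which proves the theorem by citing Lemma 2.5(c) (whose proof is exactly your monotone-squeeze of the intermediate point $c_{x,y}$), Theorem 3.6 for the one-sided derivative identities, and Example 3.10 (the function $|x|$ with $g=\sgn$) for the failure of the converse. You have merely unpacked the details that the paper delegates to those three prior results, including a concrete numerical witness for the counterexample.
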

\begin{proof}
This is a direct result of Lemma 2.5(c), Theorem 3.6 and the next example.
\end{proof}
Now, we show that there exists a convex function without any increasing MV-function.
\begin{ex}
The real function $f(x)=|x|$ dose not have any increasing (monoton) MV-function
(although it is convex). For if $g$ is an increasing MV-function of $f$, then $g$
is of the form $(3.1)$ in Example 3.2 that can not satisfy one of the equivalent conditions
in Proposition 2.7 (because the equation $|y|-|x|=g(\eta(x,y))(y-x)$ is impossible for all $x<0<y$).
Note that, here, $f'_{\pm}$ are increasing but not MV-functions of $f$.
\end{ex}
Considering the above example, we arrive at the following problem.\\
{\bf Problem IV.} Give an example of a function $f$ with more than one increasing
MV-function ($f$ must be convex and non-differentiable at some points).
 \section{Mean value and comparative convex functional equations and inequalities}
Now, we focus on functional equations and inequalities induced by the topics MV-functions
and  compared convexity.
\subsection{MV-functional equations and inequalities}
 The equivalent conditions mentioned in Propositions 2.7 and 3.3 give us several
 equivalent functional equations and inequalities. Now,
 we discuss them.\\
Let $I$ be an interval, $f:I\rightarrow \mathbb{R}$ and
$g:I^\circ\rightarrow \mathbb{R}$ real functions, and $\eta:I\times I\rightarrow I^\circ$
a strict mean function,
$\lambda:I\times I\rightarrow (0,1)$ and $h:D_h\rightarrow \mathbb{R}$
 (where $D_h$ contains the image of the function $\frac{f(y)-f(x)}{y-x}$ for all $(x,y)\in I\check{\times}I$).
Note that some of these functions are unknown and others known functions.
Then, we call $(4.1)$ (resp. $(4.2)$) MV-functional equation (resp. MV-functional
inequality).
 \begin{equation}
 \frac{f(y)-f(x)}{y-x}=g(\eta(x,y))\;\; ;\;\;(x,y)\in I\check{\times}I
 \end{equation}
\begin{equation}
 0<\frac{1}{y-x}(h(\frac{f(y)-f(x)}{y-x})-x))<1\;\; ;\;\;(x,y)\in I\check{\times}I
\end{equation}
It is interesting to know that the MVT-equation and inequality have close relations if $h=g^{-1}$.
In this case, every solution $(f,g)$ of $(4.1)$ satisfies $(4.2)$. But the converse is true if
$g$ is monotone and enjoys the Darboux property. For if it is so, then  $(4.2)$ implies that
$\frac{f(y)-f(x)}{y-x}$ (for $(x,y)\in I\check{\times}I$) lies between $x$ and $y$, and so $g$ is
a monotonically continuous MV-function of $f$, hence $(f,g)$ satisfies  $(4.1)$
(for some  strict mean function $\eta$). \\
Note that $(2.1)$ provides several equivalent inequalities to $(4.2)$
that one of them is
\begin{equation*}
 \min\{x,y\}<h(\frac{f(y)-f(x)}{y-x})<\max\{x,y\}\;\; ;\;\;(x,y)\in I\check{\times}I
\end{equation*}
For studying MV-equations we divided them to the following types:\\
{\bf (a) $f$ is a known function.} \\
The MV-equation has infinitely many solutions $(g,\eta)$. For if $\eta:I\check{\times} I\rightarrow I$
is an arbitrary invertible strict mean, then for every $t\in I^\circ$
there exists a unique $(x_t,y_t)\in I\check{\times} I$ such that $t=\eta(x_t,y_t)$. Now, defining
$g=g_\eta$ (on $I^\circ$)  by $g(t):=\frac{f(y_t)-f(x_t)}{y_t-x_t}$ we observe that $(g,\eta)$ satisfies
the equation. Since the family of such functions $\eta$ is infinite, we are done. Also, the MV-inequality
has infinitely many solutions $h$, by Theorem 2.3 and the above explanation.
Therefore, a uniqueness theorem should be useful for this case.
\begin{lem} $($A uniqueness conditions for the MV-equation and inequality$)$.
If $f$ is differentiable and $f'$ is invertible, then $h=f'^{-1}$
is a solution of the MV-inequality. Moreover, if $f'^{^{-1}}$ is continuous $($e.g., if $f'$ is strictly
increasing and continuous$)$  then  $h=f'^{^{-1}}$
$($resp. $g=f')$ is the unique solution of the MV-inequality $($resp. MV-equation$)$ with the properties.\\
\end{lem}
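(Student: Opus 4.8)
The plan is to treat the two assertions separately, using the classical Mean Value Theorem for existence and Theorem 2.3 for the uniqueness clauses. For the existence part I assume only that $f$ is differentiable and $f'$ is invertible. Fix distinct $x,y\in I$; by the Mean Value Theorem there is a point $c$ strictly between $x$ and $y$ with $f'(c)=\frac{f(y)-f(x)}{y-x}$. In particular this difference quotient lies in the range of $f'$, so $h={f'}^{-1}$ is defined on it and $h\left(\frac{f(y)-f(x)}{y-x}\right)=c$. Since $\min\{x,y\}<c<\max\{x,y\}$, the equivalent form of $(4.2)$ coming from $(2.1)$ holds, so $h={f'}^{-1}$ solves the MV-inequality.

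For uniqueness of the solution of the MV-inequality, under the extra hypothesis that ${f'}^{-1}$ is continuous, I would reuse the limiting argument from the second half of Theorem 2.3. Let $h$ be any continuous solution of $(4.2)$ and fix $x\in I^\circ$. For $y>x$ the value $h\left(\frac{f(y)-f(x)}{y-x}\right)$ is trapped between $x$ and $y$, hence tends to $x$ as $y\to x^+$; on the other hand $\frac{f(y)-f(x)}{y-x}\to f'(x)$, so continuity of $h$ forces $h(f'(x))=x$. As $x$ is arbitrary, $h\circ f'=\mathrm{id}$ on $I^\circ$, so $h$ agrees with ${f'}^{-1}$ on the range of $f'$, which is exactly the set of difference quotients on which $(4.2)$ constrains $h$. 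This gives $h={f'}^{-1}$.

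The uniqueness of $g=f'$ for the MV-equation is then immediate. A solution $(g,\eta)$ of $(4.1)$ makes $g$ a MV-function of $f$, and the relevant ``property'' is that $g$ be inverse-continuous, which pairs with the standing hypothesis that ${f'}^{-1}$ is continuous (equivalently, $h=g^{-1}$ continuous). Theorem 2.3 states precisely that in this situation $f'$ is the unique inverse-continuous MV-function of $f$, whence $g=f'$. One may also note that, $f'$ being a derivative, it has the Darboux property; being injective it is then strictly monotone with interval range, so continuity of ${f'}^{-1}$ on that interval already forces $f'$ itself to be continuous, and the conclusion alternatively follows from the continuous-MV-function case of Theorem 2.3.

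I expect the substantive work to be minimal and the only care needed to be bookkeeping: one must confirm that every difference quotient lies in the domain of $h={f'}^{-1}$ (guaranteed by the Mean Value Theorem) and must state explicitly which regularity ``property'' is imposed --- continuity of $h$, equivalently inverse-continuity of $g$ --- so that the two uniqueness clauses align with Theorem 2.3. All the analytic content sits in the single squeeze-plus-continuity limit $y\to x^+$, which I would establish once and then invoke for the inequality and, via Theorem 2.3, for the equation.
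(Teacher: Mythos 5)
Your proof is correct and follows essentially the same route as the paper: the existence claim is the Mean Value Theorem combined with the injectivity of $f'$ (which is exactly what the paper's citation of Lemma 2.5(e) and $(2.1)$ unpacks to), and the uniqueness claim is the same squeeze-plus-continuity limit $y\to x^{+}$ giving $h\circ f'=\mathrm{id}$, with the MV-equation case then handed off to the inverse-continuous clause of Theorem 2.3. Your closing remark that Darboux plus injectivity forces $f'$ to be strictly monotone and continuous is a nice additional observation not made in the paper, but it does not change the argument.
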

\begin{proof}
 The first part of the claim is clear by Lemma 2.5(e) and $(2.1)$. Now, let $h$ be an arbitrary solution of the inequality and
$x<y$ (without lose of generality). Then, $x<h(\frac{f(y)-f(x)}{y-x})<y$. By fixing $x$ and $y\rightarrow x$
we get
$$
\lim_{y\rightarrow x^+}h(\frac{f(y)-f(x)}{y-x})=x.
$$
This implies $hf'_+(x)=x$, since $h$ is continuous. Thus the assumption yields $h=f'^{^{-1}}$.
Therefore, $g=h^{-1}=f'$ is also a unique solution of the MV-equation.\\
{\bf (b) $g$ (resp. $h$) is a known function.}
\end{proof}
For this case, we have also infinitely many solutions $(f,\eta)$ (resp. $f$) for
$(4.1)$ (resp. $(4.2)$), e.g., put $\eta(x,y)=c$ and $f(t)=g(c)t$, for all
constants $c\in I$.
However, the next theorem gives the general solution of them under some conditions on $g$ and $h$.
\begin{theorem} If  $h$ $($resp. $g)$ is strictly monoton and continuous, then all solutions
of the MV-inequality $($resp. MV-equation$)$ are of the form
$$
f(t)=\int^t h^{-1}(s)ds+c,
$$
(resp.
$$
f(t)=\int^t g(s)ds+c,
$$
$$
\eta(x,y)=g^{-1}(\frac{f(y)-f(x)}{y-x}),
$$
of curse, $\min\{x,y\}<\eta(x,y)<\max\{x,y\}$ $)$, for all real constants $c$.
\end{theorem}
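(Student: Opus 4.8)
The plan is to reduce both halves of the statement to the single structural fact already recorded in Lemma 2.5(b): a \emph{continuous} MV-function of $f$ is forced to coincide with $f'$. I would treat the MV-equation first, since the MV-inequality then follows by setting $g=h^{-1}$.

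For the MV-equation (4.1) with $g$ strictly monotone and continuous, the first observation is that a pair $(f,\eta)$ (with $\eta$ a strict mean function) solves (4.1) exactly when $g$ is an MV-function of $f$ having $\eta$ as a choice function; this is precisely the equivalence of (a) and (b) in Proposition 2.7. Since $g$ is continuous by hypothesis, Lemma 2.5(b) then forces $f$ to be differentiable on $I^\circ$ with $g=f'$. Because $g$ is continuous, the fundamental theorem of calculus applies and gives $f(t)=\int^t g(s)\,ds+c$ for some real constant $c$. Having pinned down $f$, I recover $\eta$ directly from (4.1): strict monotonicity makes $g$ invertible, so $g(\eta(x,y))=\frac{f(y)-f(x)}{y-x}$ yields $\eta(x,y)=g^{-1}\!\left(\frac{f(y)-f(x)}{y-x}\right)$.

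For the converse direction, i.e.\ that every $f$ of this form genuinely solves the equation, I would verify that $g=f'$ really is an MV-function: by the classical mean value theorem applied to $f$ on $[\min\{x,y\},\max\{x,y\}]$ there is a point $\xi$ strictly between $x$ and $y$ with $f'(\xi)=\frac{f(y)-f(x)}{y-x}$, that is $g(\xi)=\frac{f(y)-f(x)}{y-x}$; strict monotonicity makes $\xi$ unique and equal to $g^{-1}\!\left(\frac{f(y)-f(x)}{y-x}\right)$. This simultaneously confirms the stated formula for $\eta$ and the strict-mean inequality $\min\{x,y\}<\eta(x,y)<\max\{x,y\}$, the latter being nothing but the openness of the interval in which the mean value theorem locates $\xi$.

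Finally, for the MV-inequality (4.2) with $h$ strictly monotone and continuous, I would put $g:=h^{-1}$, which is again strictly monotone and continuous. Via the chain of equivalences in (2.1), the inequality (4.2) says exactly that $h\!\left(\frac{f(y)-f(x)}{y-x}\right)$ lies strictly between $x$ and $y$; setting $c_{x,y}:=h\!\left(\frac{f(y)-f(x)}{y-x}\right)$ produces a point between $x$ and $y$ with $g(c_{x,y})=h^{-1}(c_{x,y})=\frac{f(y)-f(x)}{y-x}$, so $g=h^{-1}$ is a continuous MV-function of $f$. Lemma 2.5(b) then gives $f'=h^{-1}$, whence $f(t)=\int^t h^{-1}(s)\,ds+c$, and the converse runs exactly as in the equation case. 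I expect the only delicate point to be bookkeeping the direction of monotonicity (so that $g^{-1}=h$ and the two strict inequalities come out with the correct orientation) and matching the open-interval conclusion of the mean value theorem to the strict inequalities required of a strict mean function; the substantive content is carried entirely by Lemma 2.5(b) together with the fundamental theorem of calculus.
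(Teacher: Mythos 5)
Your argument is correct and lands on the same pivot as the paper's proof --- every solution is characterized by $f'=h^{-1}$ (resp.\ $f'=g$), after which the fundamental theorem of calculus gives the integral form --- but you reach that pivot by a genuinely different route. The paper works directly: for sufficiency it applies the mean value theorem for integrals to $\frac{1}{y-x}\int_x^y h^{-1}(s)\,ds=h^{-1}(z)$ with $x<z<y$, and for necessity it uses strict monotonicity of $h$ to convert $(4.2)$ into the two-sided bound $h^{-1}(x)<\frac{f(y)-f(x)}{y-x}<h^{-1}(y)$ and then sends $y\to x^+$ (resp.\ $x\to y^-$) to read off $f'_{\pm}=h^{-1}$, declaring the equation case ``similar.'' You instead observe that any solution exhibits $h^{-1}$ (resp.\ $g$) as a \emph{continuous} MV-function of $f$ --- via the equivalences $(2.1)$ for the inequality and Proposition 2.7(a)$\Leftrightarrow$(b) for the equation --- and then invoke Lemma 2.5(b) to force $g=f'$, reserving the classical (differential) mean value theorem for the converse. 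Your reduction is more economical given the paper's earlier machinery, avoids repeating the limit argument, and cleanly separates the roles of the hypotheses: continuity alone yields $g=f'$, while strict monotonicity is needed only to invert $g$ and produce $\eta(x,y)=g^{-1}\!\left(\frac{f(y)-f(x)}{y-x}\right)$. The paper's version is more self-contained and makes the monotone sandwich explicit. The one point worth being careful about in your write-up (and which the paper also glosses over) is that treating $h^{-1}$ as a continuous MV-function requires the points $c_{x,y}=h\!\left(\frac{f(y)-f(x)}{y-x}\right)$ to range over a set on which $h^{-1}$ is defined and continuous; since $h$ is strictly monotone and continuous on an interval this is automatic, but it deserves a sentence.
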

\begin{proof} If  $f(t)=\int^t h^{-1}(s)ds+c$ and $x<y$ then there exists a $x<z<y$ suxh that
$$
\frac{f(y)-f(x)}{y-x}=\frac{1}{y-x}\int_x^y h^{-1}(s)ds=h^{-1}(z),
$$
and so the inequality $(2.2)$ is equivalent to $0<\frac{z-x}{y-x}<1$ that is clearly true.
For uniqueness, if $h$ is strictly monotonic and continiuous, then
$$
h^{-1}(x)<\frac{f(y)-f(x)}{y-x}<h^{-1}(y),
$$
whenever $x<y$. Now, fixing $x$ and $y\rightarrow x^+$ we get $h^{-1}(x)=f'_+(x)$,
and in a similar way $h^{-1}(y)=f'_-(y)$. Therefore, $f'=h^{-1}$ on the related interval,
and we are done it for the MV-inequality. Proof for the MV-equation is similar.
\end{proof}
{\bf (c) $\eta$ is a known function.} \\
In this case, the MV-equation has infinitely many solutions, obviously. As a special interesting
equation we have
$$
\frac{f(y)-f(x)}{y-x}=g(\lambda x+\mu y),
$$
where $\lambda,\mu \geq 0$ and $\lambda+\mu =1$ ($\eta(x,y)=\lambda x+\mu y$). One can see
its general solution in \cite{SaRi}.\\
{\bf (d) Two of the functions $f$, $g$ and $\eta$ are known.} \\
The MV-equation does not have a solution necessarily. For
if $\eta(x,y)=(1-\lambda)x+\lambda y$ where $0<\lambda<1$ is a constant, then
$f$ must be a polynomial of order at most two (see, for example, \cite{SaRi}).  Also, if $g\equiv 0$ and $f$ is not one-to-one,
then they do not satisfy the equation.
\subsection{Compared convex and concave functional inequalities}
Now, we consider the functional inequalities associated to the $g$-convexity (and concavity).
The obtained results of the topic enable us to solve several such functional inequalities.
Hence, we first state their general solutions as a theorem, and then completely solve
several functional inequalities and related systems, as some applications of the study.
The following theorem is a direct result of Theorem 3.8 and Proposition 3.3.
\begin{theorem}
The $(g$-comparative convex$)$ functional inequality
\begin{equation}
 f(x)\geq f(y)+g(y)(x-y)\;\; ;\;\; x,y \in I.
 \end{equation}
has a solution if and only if $f$ is convex, $g$
 is increasing and $f'_+(x)=g(x^+)$, $f'_-(x)=g(x^-)$ on $I^\circ$. Moreover, \\
(a) If $g$ is known and $I$ open, then the general solution is
 \begin{equation}
f(x)-f(c)=\int_c^x g(t)dt=\int_c^x g^{\pm}(t)dt\;\; ;\;\; x\in I,
 \end{equation}
 where $c$ is an arbitrary fixed point in $I$.\\
(b) If $f$ is given, then the general solution is
$$
g(x) =\lambda(x)f'_-(x)+(1-\lambda(x))f'_+(x)\;\; ;\;\; x\in I^\circ,
$$
for all functions $\lambda:I^\circ \rightarrow [0,1]$. \\
Another general solution is of the form
  \begin{equation}
g(x) = \begin{cases}
             f'(x)  & \text{if } x\in I_f \\
             d_n  & \text{if } x=c_n\in I\setminus I_f
       \end{cases}
 \end{equation}
where $I$ is open, $I_f:=\{x\in I: f\mbox{ is differentiable at } x\}$, $I\setminus I_f=\{c_n\}_{n=1}^N$ or $\emptyset$,
$d_n\in [f'_-(c_n),f'_+(c_n)]$.
\end{theorem}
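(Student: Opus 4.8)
The plan is to assemble the theorem from the machinery already developed, since the inequality $f(x)\geq f(y)+g(y)(x-y)$ is, by Definition 3.1, exactly the statement that $f$ is $g$-convex. First I would establish the main equivalence. For the ``only if'' direction, assume $(f,g)$ solves the inequality, i.e.\ $f$ is $g$-convex; then Corollary 3.5 gives that $f$ is convex and $g$ is increasing, and Theorem 3.6 gives the one-sided identities $f'_-(x)=g(x^-)$ and $f'_+(x)=g(x^+)$ on $I^\circ$. For the ``if'' direction, suppose $f$ is convex, $g$ is increasing, and those two identities hold. Because $g$ is increasing we have $g(x^-)\leq g(x)\leq g(x^+)$, and substituting the identities turns this into $f'_-(x)\leq g(x)\leq f'_+(x)$ on $I^\circ$; Corollary 3.7 then yields that $f$ is $g$-convex, closing the equivalence.

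For part (a) I would specialize to $g$ known (hence, by the equivalence, necessarily increasing) and $I$ open, and invoke Theorem 3.8(a): a function $f$ solves the inequality if and only if $f(x)-f(c)=\int_c^x g(t)\,dt$ for some fixed $c\in I$, the additive constant $f(c)$ being free. The remaining identity $\int_c^x g(t)\,dt=\int_c^x g^{\pm}(t)\,dt$ follows because an increasing $g$ has at most countably many discontinuities, so $g$ and its right/left-continuous modifications $g^{\pm}$ differ on a set of measure zero and produce the same integral.

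For part (b) I would specialize to $f$ given; a solution $g$ exists only if $f$ is convex, and then by Corollary 3.7 the solutions are precisely the functions with $f'_-\leq g\leq f'_+$ on $I^\circ$. Writing any value of the interval $[f'_-(x),f'_+(x)]$ as a convex combination of its endpoints gives the form $g(x)=\lambda(x)f'_-(x)+(1-\lambda(x))f'_+(x)$ with $\lambda:I^\circ\to[0,1]$ arbitrary; at differentiability points $f'_-=f'_+$ forces $g=f'$, while at the (at most countable) non-differentiability points $c_n$ the value $d_n=g(c_n)$ may be any point of $[f'_-(c_n),f'_+(c_n)]$, which is exactly the piecewise form of Theorem 3.8(b).

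The main obstacle---really the only non-routine point---is that the equivalence lists monotonicity of $g$ as a hypothesis, yet the solution forms in (b) impose no monotonicity on $g$ directly, so one must check that the two descriptions genuinely parametrize \emph{all} solutions. I would resolve this by noting that monotonicity of $g$ is automatic: for a convex $f$ and $x<y$ one has $f'_+(x)\leq \frac{f(y)-f(x)}{y-x}\leq f'_-(y)$, whence $g(x)\leq f'_+(x)\leq f'_-(y)\leq g(y)$ as soon as $f'_-\leq g\leq f'_+$. Thus the single condition $f'_-\leq g\leq f'_+$ already forces $g$ to be increasing, so no extra monotonicity constraint need be imposed and the characterization is complete.
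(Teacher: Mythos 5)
Your proposal is correct and takes essentially the same route the paper intends: the paper gives no written proof, stating only that the theorem is ``a direct result of Theorem 3.8 and Proposition 3.3,'' and your argument is precisely the fleshing-out of that derivation via Corollary 3.5, Theorem 3.6, Corollary 3.7 and Theorem 3.8. Your closing observation that $f'_-\leq g\leq f'_+$ already forces $g$ to be increasing (via $f'_+(x)\leq\frac{f(y)-f(x)}{y-x}\leq f'_-(y)$ for $x<y$) is a worthwhile detail the paper leaves implicit.
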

\subsection{Some applications of the comparative convexity and concavity}
 General solutions of the following functional inequalities and related systems are obtained from the topic
 of $g$-convexity and $g$-concavity. In continuation, the interval $I$ is assumed open.
 \begin{ex}
General solutions of the functional inequality (self-convexity that is equivalent to
$f$-convexity of $f$):
 \begin{equation}
f(x)\geq f(y)(1+x-y) \; ; \; x,y\in I,
 \end{equation}
is of the form $f(t)=\lambda e^t$ with $\lambda \geq 0$.
Thus, general solution of
$$
\frac{f(x)}{f(y)}\geq 1+x-y \; ; \; x,y\in \mathbb{R}
$$
is $\lambda e^t$, for all $\lambda\neq 0$. \\
Because $f$ must be continuous (since it is convex an $I$ open) and so $f'=f$
(by Theorem 3.6) and $f$ is increasing.
Therefore, the natural exponential function is the only real function
satisfying $(4.6)$ (or the second inequality) with the initial condition $f(0)=1$.\\
A similar inequality and its general solution can be stated for the self-concavity.
As a generalization of $(4.6)$, let $\phi$ be a given continuous function on $I$, $k$ a real constant and
consider the functional inequality
\begin{equation}
f(x)\geq f(y)+(kf+\phi)(y)(x-y) \; ; \; x,y\in I.
 \end{equation}
If it has a solution, then $f$ satisfies the linear differential equation $f'-kf=\phi$ on $I$. Thus
$f$ can be determined trough $(4.7)$ and the condition that $kf+\phi$ must be increasing.
For example, putting $\phi=0$ on $I$, the general solution of $f(x)\geq f(y)+kf(y)(x-y)$ is
of the form $f(t)=\lambda e^{kt}$ with $\lambda,k \geq 0$.
\end{ex}
\begin{ex}
General solution of the system of functional inequalities (symmetric convexity, that
says $f$ is $g$-convex and $g$ is $f$-convex):
 \begin{equation}
\left\{\begin{array}{l}
f(x)\geq f(y)+g(y)(x-y)\\
\qquad\qquad \qquad\qquad \qquad\qquad \; ; \; x\in I\\
g(x)\geq g(y)+f(y)(x-y)
\end{array} \right.
 \end{equation}
 is
 $$f(t)=\lambda_1 e^t+\lambda_2 e^{-t},\; g(t)=\mu_1 e^t+\mu_2 e^{-t},$$
 for all $t\in I$ and $\lambda_1,\mu_1, \lambda_2,\mu_2$ such that $\lambda_1\theta_I\geq |\lambda_2|$
 and  $\mu_1\theta_I\geq |\mu_2|$, where $\theta_I=\inf_{t\in I} e^{2t}$.\\
 Because both $f$ and $g$ must be continuous and so $f'=g$, $g'=f$. The conditions on the coefficients
 come from increasingness of them.
 Note that if $I=\mathbb{R}$, then the general solution is simplified to the form
 $f(t)=\lambda e^t$, $g(t)=\mu e^t$ for $\lambda, \mu\geq 0$ ($t\in \mathbb{R}$).
\end{ex}
\begin{ex}
General solution of the system of functional inequalities (convex-concavity,
that says $f$ is $g$-convex and $h$-concave):
 \begin{equation}
\left\{\begin{array}{l}
f(x)\geq f(y)+g(y)(x-y)\\
\qquad\qquad \qquad\qquad \qquad\qquad \; ; \; x\in I\\
f(x)\leq f(y)+h(y)(x-y)
\end{array} \right.
 \end{equation}
 is $f(t)=at+b$, $g(t)=h(t)=a$ ($t\in I$),
 for all constants $a,b$.\\
 Because $(4.9)$ implies that $f'=g=h$, $g$ must be increasing and $h$ decreasing.
\end{ex}

 \subsection*{Acknowledgement} The author would like to thank the persons who
 answered or sent their feedback and comments to some of my questions (related to this paper)
 in the math.stackexchange website.

\end{document}